\newtheorem*{theorem*}{Main Theorem}
\newtheorem{lemma}{Lemma}
\newtheorem{corollary}[lemma]{Corollary}
\newtheorem{theorem}[lemma]{Theorem}
\newtheorem{proposition}[lemma]{Proposition}
\theoremstyle{remark}
\theoremstyle{definition}  
\newtheorem{definition}[lemma]{Definition}
\DeclareMathOperator\Inv{Inv}
\newcommand\Fq{\mathbf F_q}
\newcommand\ZZ{\mathbf Z}
\newcommand\bt[1]{\binom{#1}{2}}
\newcommand\qbin[2]{{#1\brack#2}_q}
\renewcommand\AA{\mathcal A}
\newcommand\heine{{}_2\phi_1}
\newcommand\qqpoch[2]{(q^{#1};q)_{#2}}
\title[Anti-Invariant Subspaces]{Enumeration of Anti-Invariant Subspaces and Touchard's\\ Formula for the Entries of the $q$-Hermite Catalan Matrix}
\author{Amritanshu Prasad}
\address{The Institute of Mathematical Sciences, Chennai, India.}
\address{Homi Bhabha National Institute, Mumbai, India.}
\email{amri@imsc.res.in}
\author{Samrith Ram}
\address{Indraprastha Institute of Information Technology Delhi, New Delhi, India.}
\email{samrith@gmail.com}
\keywords{Touchard-Riordan formula, anti-invariant subspaces, invariant subspaces, splitting subspaces, finite fields, $q$-Hermite orthogonal polynomials, chord diagrams.}
\begin{document}

\maketitle
\begin{abstract}
  We express the number of anti-invariant subspaces for a linear operator on a finite vector space in terms of the number of its invariant subspaces.
  When the operator is diagonalizable with distinct eigenvalues, our formula gives a finite-field interpretation for the entries of the $q$-Hermite Catalan matrix.
  We also obtain an interesting new proof of Touchard's formula for these entries.
\end{abstract}
\section{Introduction}
Let $q$ be a prime power, and let $\Fq$ denote a finite field of order $q$. For nonnegative integers $n$ and $k$, let ${n \brack k}_q$ denote the $q$-binomial coefficient, which is the number of $k$-dimensional subspaces of $\Fq^n$. Recall that for a linear operator $T\in M_n(\Fq)$, a subspace $W\subset \Fq^n$ is said to be \emph{$T$-invariant} if $T(W)\subset W$. 
\begin{definition}
  For a linear operator $T\in M_n(\Fq)$, a subspace $W\subset \Fq^n$ is said to be \emph{$T$-anti-invariant} if
  \begin{displaymath}
    \dim(W + TW) = 2\dim W.
  \end{displaymath}
\end{definition}
\begin{theorem*}
  For any $T\in M_n(\Fq)$, the number of $\ell$-dimensional $T$-anti-invariant subspaces of $\Fq^n$ is given by
  \begin{equation}
    \label{eq:main}
    \omega^T_{n\ell} = q^{\binom \ell 2}\sum_{j=0}^{\ell} (-1)^j(X^T_j-X^T_{j-1})\qbin{n-\ell-j}{n-2\ell}q^{\binom{\ell-j+1}2},
  \end{equation}
  where $X^T_j$ is the number of $j$-dimensional $T$-invariant subspaces of $\Fq^n$.
\end{theorem*}
The computation of $X^T_j$ from the similarity class of $T$ as a polynomial in $q$ is easy \cite[Section~2]{https://doi.org/10.48550/arxiv.2205.11076} and has been implemented in SageMath \cite{simsage}. The formula in Eq. \eqref{eq:main} can be recast in the following form, which will be used in the proof.
\begin{equation}
  \label{eq:alt}
  \omega^{T}_{n\ell} = q^{\ell \choose 2}\sum_{j=0}^\ell(-1)^j\left({n-\ell-j\brack n-2\ell}_qq^{\ell-j+1\choose 2}+{n-\ell-j-1\brack n-2\ell}_qq^{\ell-j\choose 2}\right) X^T_j.
\end{equation}

Anti-invariant subspaces were introduced by Barr\'ia and Halmos \cite{MR748946}.
A matrix $T\in M_n(\Fq)$ is said to be \emph{$\ell$-transitive} if each $\ell\times \ell$ matrix over $\Fq$ appears as the upper left submatrix of some matrix similar to $T$. 
Using the fact that $T$ is $\ell$-transitive if and only if it admits an $\ell$-dimensional anti-invariant subspace, they characterized $\ell$-transitive matrices.
Sourour \cite{MR822138} determined the maximal dimension of an anti-invariant subspace.
Kn\"uppel and Nielsen \cite{MR2013452} defined a subspace $W$ to be $k$-fold anti-invariant if
\begin{displaymath}
  \dim(W+TW\dotsb+T^kW) = (k+1)\dim W,
\end{displaymath}
and determined the maximal dimension of a $k$-fold anti-invariant subspace for a linear operator $T$. They also showed \cite[Thm. 2.1]{MR2013452} that if $d_1\leq d_2\leq\cdots \leq d_n$ denote the degrees of the invariant factors in the Smith normal form of $xI-A$ where $A$ is the matrix $T$ with respect to some basis, then a $k$-fold anti-invariant subspace of dimension $m$ exists if and only if $d_{n-m+1}+\cdots+d_n\geq (k+1)m$.

Enumerative versions of such problems can be traced back to a paper of Bender, Coley, Robbins and Rumsey \cite{MR1141317}. In the context of pseudorandom number generation, Niederreiter~\cite{N2} made the following definition:    
given $T\in M_{md}(\Fq)$, an $m$-dimensional subspace $W\subset \Fq^{md}$ is said to be a $T$-\emph{splitting} subspace of degree $d$ if
\begin{displaymath}
  W + TW + \dotsb + T^{d-1} W = \Fq^{md}.
\end{displaymath}
 Niederreiter asked for the number of $m$-dimensional $T$-splitting subspaces of degree $d$ when the characteristic polynomial of $T$ is irreducible, unaware that the question had already been answered in \cite{MR1141317}. The number of $T$-splitting subspaces in this case is given by
\begin{displaymath} 
  q^{m(m-1)(d-1)}\frac{q^{md}-1}{q^m-1}.
\end{displaymath}
Chen and Tseng \cite{sscffa} reproved this result by developing recurrence relations involving a larger class of combinatorial problems.  
Their recurrence relations are independent of the matrix $T$, and reduce the enumeration of splitting subspaces to the enumeration of flags of $T$-invariant subspaces.
However, the recurrences are very difficult to solve in general.

Aggarwal and Ram \cite{agram2022} used the recurrences of Chen and Tseng to show that when $T$ is regular nilpotent, the number of $T$-splitting subspaces of degree $d$ is given by $q^{m^2(d-1)}$.
For regular nilpotent $T$, $X_j^T=1$ for $0\leq j\leq n$, so \eqref{eq:main} becomes
\begin{equation}
  \label{eq:nilp}
  \omega^T_{n\ell} = q^{\ell^2}\qbin{n-\ell}{n-2\ell}.
\end{equation}
Setting $\ell = m$ and $n=2m$, we recover the formula of \cite{agram2022} in the case $d=2$. Later, it came to light that these results in the regular nilpotent case follow from the results in~\cite{MR1141317}. 

Viennot's combinatorial theory of orthogonal polynomials~\cite{viennot1983theorie} places the moments of an orthogonal polynomial sequence in the first column of an infinite array known as the Catalan matrix. There is a connection between our main theorem and the Catalan matrix associated to the $q$-Hermite orthogonal polynomial sequence as defined by Ismail, Stanton and Viennot~\cite{MR930175}. This connection emerges from our work \cite{fpsac,pr} where a more general class of enumerative problems is considered.
\begin{definition}
  For a linear endomorphism $T\in M_n(\Fq)$ and a partition $\mu=(\mu_1,\dotsc,\mu_k)$ of $n$, a subspace $W\subset \Fq^n$ is said to have \emph{$T$-profile $\mu$} if
  \begin{displaymath}
    \dim(W + TW + \dotsb + T^{i-1}W) = \mu_1+\dotsb + \mu_i \text{ for each } 1\leq i\leq k.
  \end{displaymath}
  The number of subspaces with $T$-profile $\mu$ is denoted $\sigma^T_\mu$.
\end{definition}
When $n=m+\ell$, a subspace $W$ has profile $(m,\ell)$ if $W$ is $m$-dimensional, and $W+TW=\Fq^n$.
Let $T^*$ denote the transpose of $T$.
For a subspace $W\subseteq \Fq^n$, let $W^0$ denote its annihilator in the linear dual of $\Fq^n$.
When $W$ has profile $(m,\ell)$, $\dim(W^0)=l$. Also  $W+TW=\Fq^n$ if and only if $W^0\cap (TW)^0 = (W+TW)^0=\{0\}$. By using the identity
\begin{equation}
  \label{eq:dimeq}
  \dim(U\cap S^{-1}U)=2\dim U-\dim (U+ SU),
\end{equation}
for $S\in M_n(\Fq)$ and each subspace $U\subset \Fq^n$, it follows that $$\dim(W^0\cap (TW)^0)=\dim(W^0\cap (T^*)^{-1}W^0)=2 \dim W^0-\dim (W^0+ T^*W^0).$$ Therefore, $W$ has profile $(m,\ell)$ if and only if $W^0$ is an $\ell$-dimensional $T^*$-anti-invariant subspace.
Since $T$ is similar to $T^*$, we have
\begin{equation}
  \label{eq:anti-inv-to-ml}
  \omega_{n\ell}^T =\omega_{n\ell}^{T^*}= \sigma_{(m,\ell)}^T.
\end{equation}

Let $[n]$ denote the set consisting of the first $n$ positive integers. Denote by $\Pi(\mu')$ the set of partitions of $[n]$ whose block sizes are the parts of the integer partition conjugate to $\mu$.
Suppose $T$ is a diagonalizable matrix with distinct eigenvalues in $\Fq$.
One of the main results of \cite{pr} is a combinatorial formula for $\sigma_\mu^T$ in terms of a statistic $v$ on set partitions known as the \emph{interlacing number}:
\begin{equation}
  \label{eq:split-ss}
  \sigma^T_\mu = (q-1)^{\sum_{j\geq 2}\mu_j}q^{\sum_{j\geq 2}\binom{\mu_j}2}\sum_{\AA\in \Pi(\mu')}q^{v(\AA)}.
\end{equation}
From \eqref{eq:anti-inv-to-ml}, we obtain
\begin{equation}
  \label{eq:anti-inv-split-ss}
  \omega_{n\ell}^T = (q-1)^\ell q^{\binom \ell 2}a_{n,n-2\ell},
\end{equation}
where $a_{n,n-2\ell} =  \sum_\AA q^{v(\AA)}$, a sum is over partitions of $[n]$ with $\ell$ blocks of size $2$ and $n-2\ell$ singleton blocks.
In Section~\ref{sec:incomplete-chord-diags}, we show that $a_{n,n-2\ell}$ coincides with polynomials defined recursively by Touchard~\cite{MR46325} in the context of the stamp-folding problem.
Touchard \cite[Eq.~(28)]{MR46325} showed that
\begin{equation}
  \label{eq:touchard}
  (q-1)^\ell a_{n,n-2\ell} = \sum_{j=0}^\ell (-1)^j \left[\binom nj - \binom n{j-1}\right]\qbin{n-\ell-j}{n-2\ell}q^{\binom{\ell-j+1}2}.
\end{equation}

When $T$ is a diagonalizable matrix with distinct eigenvalues in $\Fq$, $X^T_j = \binom nj$.
Substituting this into the formula~\eqref{eq:main} of our main theorem gives
\begin{equation}
  \label{eq:main-split-ss}
  \omega_{n\ell}^T = q^{\binom \ell2}\sum_{j=0}^\ell (-1)^j\left[\binom nj - \binom n{j-1}\right]\qbin{n-\ell-j}{n-2\ell}q^{\binom{\ell-j+1}2}.
\end{equation}
Comparing the expressions in~\eqref{eq:anti-inv-split-ss} and~\eqref{eq:main-split-ss} gives a new, linear-algebraic proof of Touchard's formula \eqref{eq:touchard}.

\subsection*{A road map}
The proof of the main theorem consists of three parts which take up the next three sections of this article.

In the first part (Section~\ref{sec:existence}) we establish the existence of a formula of the form \eqref{eq:ajxj} for the number $\omega_{n\ell}^T$ of anti-invariant subspaces of a given dimension as a linear combination of the numbers $X_j^T$ of invariant subspaces, whose coefficients are independent of $T$.
It remains to show that the coefficients of these linear combinations are as in Eq.~(\ref{eq:alt}).

In the second part (Section~\ref{sec:proof-main-theorem}) we consider a family of matrices for which the number of anti-invariant subspaces is known.
Each matrix in this family gives rise to an equation in the coefficients of \eqref{eq:ajxj}.
Theorem~\ref{theorem:uniqueness}  establishes that this system of equations has a unique solution.
Thus in order to show that the coefficients are exactly the ones given in \eqref{eq:alt}, it suffices to show that the identity~\eqref{eq:main} holds for each matrix in the family.
This can be be expressed as the family \eqref{eq:zero-sum} of identities.

In the third part (Section~\ref{sec:proof-main-identity}) we prove the identities~\eqref{eq:zero-sum} by reducing them to Heine's transformations for $q$-hypergeometric functions.

The proof strategy in this article closely follows ideas in \cite{https://doi.org/10.48550/arxiv.2205.11076}; the main distinction is that the more general identity \eqref{eq:zero-sum} needed here requires a very different approach.

The final section (Section~\ref{sec:incomplete-chord-diags}) of this article is devoted to the case where $T$ is a diagonal matrix with distinct diagonal entries, and the connection to the Catalan matrix of $q$-Hermite orthogonal polynomials.

\section{Existence of a Universal Formula}
\label{sec:existence}
In this section we prove the existence of a universal formula for the number of anti-invariant subspaces of a given dimension for an arbitrary operator $T$. The main step is Lemma~\ref{prop:recurrence}, which is a special case of the recurrence of Chen and Tseng \cite[Lemma~2.7]{sscffa}. We begin by introducing some notation.

Given $T\in M_n(\Fq)$ and sets $A$ and $B$ of subspaces of $\Fq^n$, define
\begin{align*}
  \beta(A,B)&:=\{W \in A\mid W\cap T^{-1}W\in B\},\\
  \gamma(A,B)&:=\{(W_1,W_2)\mid W_1\in A, W_2\in B,\text{ and } W_1\cap T^{-1}W_1\supset W_2\}.
\end{align*}
For integers $a,b$, we also write $\beta(a,b)$ for the set of $a$-dimensional subspaces $W$ of $\Fq^n$ such that $W\cap T^{-1}W$ has dimension $b$. The quantity $\gamma(a,b)$ is defined analogously.
For example, using identity \eqref{eq:dimeq}, $\beta(a,a)$ denotes the set of $a$-dimensional $T$-invariant subspaces, whereas $\beta(a,0)$ denotes the set of $a$-dimensional $T$-anti-invariant subspaces. To explicitly specify the linear operator $T$, we also write $\beta^T(A,B)$ and $\gamma^T(A,B)$.
\begin{lemma}
  \label{prop:recurrence}
  For each $T\in M_n(\Fq)$ and $0\leq a\leq n$,
  \begin{align*}
    |\beta(a,b)| & = X_b^T\qbin{n-b}{a-b}-X_a^T\qbin ab\\
                 & + \sum_{j=0}^{b-1}|\beta(b,j)|{n-2b+j \brack a-2b+j}_q-\sum_{k=b+1}^{a-1}|\beta(a,k)|{k \brack b}_q.
  \end{align*}
\end{lemma}
\begin{proof}
  Since the collection of all $a$-dimensional subspaces of $\Fq^n$ is the disjoint union $ \coprod_{0\leq k\leq a} \beta(a,k)$, we have
  \begin{align*}
    \gamma(a,b)=\coprod_{0\leq k\leq a}\gamma(\beta(a,k),b).
  \end{align*}
  To count pairs of subspaces $(W_1,W_2)\in \gamma(\beta(a,k),b)$, first choose $W_1\in \beta(a,k)$ and then choose $W_2$ to be an arbitrary $b$-dimensional subspace of $W_1\cap T^{-1}W_1.$ It follows that
  \begin{align}
    |\gamma(a,b)|&=\sum_{k=0}^a |\gamma(\beta(a,k),b)|
                   =\sum_{k=b}^a|\beta(a,k)|{k \brack b}_q\nonumber\\
                 &=|\beta(a,b)|+\sum_{k=b+1}^a|\beta(a,k)|{k \brack b}_q. \label{eq:1}
  \end{align}
  Similarly, the set of all $b$-dimensional subspaces of $\Fq^n$ equals the disjoint union $\coprod_{0\leq j\leq b}\beta(b,j)$. Therefore
  \begin{align*}
    \gamma(a,b)=\coprod_{0\leq j\leq b}\gamma(a,\beta(b,j)).
  \end{align*}
  To count pairs $(W_1,W_2)\in \gamma(a,\beta(b,j)),$ first choose $W_2\in \beta(b,j)$ and note that  $\dim (W_2+TW_2)=2b-j$ by \eqref{eq:dimeq}.

  Given $W_2$, a pair $(W_1, W_2)$ belongs to $\gamma(a, \beta(b, j))$ if and only if $W_1$ is an $a$-dimensional subspace that contains $W_2 + TW_2$. Therefore, the number of choices for $W_1$ is ${n-(2b-j) \brack a-(2b-j)}_q$.
  Consequently,
  \begin{align}
    |\gamma(a,b)|&=\sum_{j=0}^b |\gamma(a,\beta(b,j))|\nonumber \\
                 &=\sum_{j=0}^b|\beta(b,j)|{n-(2b-j) \brack a-(2b-j)}_q. \label{eq:2}
  \end{align}
  The lemma now follows from Eqs.~\eqref{eq:1} and \eqref{eq:2}, and the fact that $|\beta(a,a)|=X_a^T$.
\end{proof}

\begin{proposition}\label{prop:tuple}
  Given integers $n$, $a$, $b$, there exist polynomials $p_j(t)\in \ZZ[t]$ $(0\leq j\leq a)$, such that, for every prime power $q$ and every $T\in M_n(\Fq)$,
  \begin{align*}
    |\beta^T(a,b)|=\sum_{j=0}^a p_j(q)X_j^T.
  \end{align*}
\end{proposition}
\begin{proof}
  Lemma~\ref{prop:recurrence} expands $|\beta^T(a,b)|$ in terms of $X_a^T$, $X_b^T$, and $|\beta^T(a',b')|$ where either $a'<a$, or $a'=a$ and $a'-b'<a-b$.
  The coefficients are polynomials in $q$ that are independent of $T$.
  Thus repeated application of Lemma~\ref{prop:recurrence} results in an expression of the stated form in finitely many steps.
\end{proof}

The following corollary shows the existence of a universal formula for the number of anti-invariant subspaces of a given dimension.
\begin{corollary}
  \label{cor:ajxj}
  For all integers $n\geq 2\ell\geq 0$, there exist polynomials $p_j(t)\in \ZZ[t]$ $(0\leq j\leq \ell)$ such that, for every prime power $q$ and every $T\in M_n(\Fq)$,
  \begin{equation}
    \label{eq:ajxj}
    \omega^T_{n\ell} = \sum_{j=0}^\ell p_j(q)X_j^T.
  \end{equation}
\end{corollary}
\begin{proof}
  Set $a=\ell$ and $b=0$ in Proposition \ref{prop:tuple}.
\end{proof}
\section{Determination of Coefficients in the Universal Formula}
\label{sec:proof-main-theorem}
We set up a system of linear equations which completely determine the polynomials $p_j(t)$ $(0\leq j\leq \ell)$ in Corollary \ref{cor:ajxj} by constructing, for each prime power $q$, a sequence of matrices $T_0(q),\dotsc,T_\ell(q)$ such that the following conditions are satisfied.
\begin{enumerate}
\item For each $0\leq i,j\leq \ell$, there exists a polynomial $X_{ij}(t)$ such that $X^{T_i(q)}_j=X_{ij}(q)$ for all prime powers $q$.
\item The determinant of the matrix $X(t):=(X_{ij}(t))_{0\leq i,j\leq \ell}$ is a non-zero element of $\ZZ[t]$.
\item The identity (\ref{eq:main}) holds for $T_i(q)$ for $i=0,\dotsc,\ell $ and all prime powers $q$.
\end{enumerate}
In effect, we have the following result.
\begin{theorem}
  \label{theorem:uniqueness}
  For each prime power $q$, the system of linear equations 
  \begin{equation}
    \label{eq:system}
    \omega^{T_i(q)}_{n\ell} = \sum_{j=0}^\ell p_j(q)X_{ij}(q), \quad 0\leq i\leq \ell,
  \end{equation}
  in the variables $p_0(q),\dotsc,p_\ell(q)$ has a solution given by
  \begin{align}\label{eq:soln}
    p_j(q)=(-1)^jq^{\ell \choose 2}\left({n-\ell-j\brack n-2\ell}_qq^{\ell-j+1\choose 2}+{n-\ell-j-1\brack n-2\ell}_qq^{\ell-j\choose 2}\right).
  \end{align}
  This solution is unique for sufficiently large prime powers $q$ and hence uniquely determines the polynomials $p_j(t)$ for $0\leq j\leq l$.
\end{theorem}
We now proceed with the construction of the matrices $T_i(q)(0\leq j\leq \ell)$ above. For each $q$, let $T_0(q)$ be any matrix in $M_n(\Fq)$ with irreducible characteristic polynomial.
For $i=1,\dotsc,\ell$, take $T_i(q)$ to be the $n\times n$ matrix with block decomposition
\begin{displaymath}
  T_i(q) =
  \begin{pmatrix}
    {\bf 0} &{\bf 0}\\
    {\bf 0} & T'_i(q)
  \end{pmatrix},
\end{displaymath}
where $T'_i(q)$ is a nonsingular $(\ell-i)\times (\ell-i)$ matrix with irreducible characteristic polynomial.
We have
\begin{align*}
  X_{j}^{T_i(q)}=
  \begin{cases}
    \delta_{0j}+\delta_{nj} & i=0,\\
    \qbin{n-\ell+i}j + \qbin{n-\ell+i}{j-\ell+i}  &1\leq i<\ell,\\
    \qbin nj  & i=\ell.
  \end{cases}
\end{align*} 
Therefore we can take
\begin{align}\label{eq:xijs}
  X_{ij}(t)=
  \begin{cases}
    \delta_{0j}+\delta_{nj} & i=0,\\
    {n-\ell+i \brack j}_t + {n-\ell+i \brack j-\ell+i}_t  &1\leq i<\ell,\\
    {n \brack j}_t  & i=\ell.
  \end{cases}
\end{align}

\begin{lemma}
  If $T\in M_n(\Fq)$ has irreducible characteristic polynomial, then
  \begin{equation}
    \label{eq:simple}
    \sigma^T_\mu = \frac{q^n-1}{q^{\mu_1}-1}\prod_{i\geq 2}q^{\mu_i^2-\mu_i}{\mu_{i-1} \brack \mu_i}_q.
  \end{equation}
\end{lemma}
\begin{proof}
  Follows from \cite[Prop. 4.6]{pr} and \cite[Thm. 3.3]{sscffa}.
\end{proof}
\begin{lemma}
  The equation \eqref{eq:system} holds for $i=0$.
\end{lemma}
\begin{proof}
  Take $\mu=(n-\ell,\ell)$ in (\ref{eq:simple}).
  The left hand side of (\ref{eq:system}) is given by
  \begin{displaymath}
    \omega^{T_0(q)}_{n\ell} =\sigma^{T_0(q)}_\mu= \frac{q^n-1}{q^{n-\ell}-1}q^{\ell^2-\ell}{n-\ell \brack \ell}_q.
  \end{displaymath}
  On the other hand, the right hand side of \eqref{eq:system} becomes
  \begin{align*}
    &q^{\ell \choose 2}\left( q^{\ell+1 \choose 2}{n-\ell \brack \ell}_q+q^{\ell \choose 2}{n-\ell-1 \brack \ell-1}_q\right)\\
    &=q^{\ell^2-\ell}{n-\ell \brack \ell}_q\left(q^\ell+\frac{[\ell]_q}{[n-\ell]_q}\right)\\
    &=q^{\ell^2-\ell}{n-\ell \brack \ell}_q\frac{[n]_q}{[n-\ell]_q} = \frac{q^n-1}{q^{n-\ell}-1}q^{\ell^2-\ell}\qbin{n-\ell}{n-2\ell},
  \end{align*}
  establishing \eqref{eq:system} for $i=0$.
\end{proof}
It remains to show that the values of $p_j(q)$ given by \eqref{eq:soln} are solutions to \eqref{eq:system} for $1\leq i\leq \ell$. We begin by showing that the left hand side of \eqref{eq:system} vanishes in these cases.
\begin{lemma}\label{lem:vanishes}
  For $1\leq i\leq \ell $, we have $\omega^{T_i(q)}_{n\ell} = 0$.
\end{lemma}
\begin{proof}
  The $\Fq[t]$-module $M_i=\Fq^n$, where $t$ acts by $T_i(q)$ is of the form
  \begin{displaymath}
    M_i =
    \begin{cases}
      \Fq^{n-\ell+i}\oplus \Fq[t]/f_i(t) &  i=1,\dotsc,\ell-1,\\
      \Fq^n & i=\ell.
    \end{cases}
  \end{displaymath}
  Here $f_i(t)$ denotes the characteristic polynomial of $T'_i(q)$ for $i=1,\dotsc,\ell-1$.
  Since none of the modules $M_i$ $(1\leq i\leq \ell)$ can be generated by $n-\ell+1$ or fewer generators, $T_i(q)$ does not admit a subspace with profile $(n-\ell,\ell)$.
  In other words, $\omega^{T_i(q)}_{n\ell} = \sigma^{T_i(q)}_{(n-\ell,\ell)}=0$ for $i=1,\dotsc,\ell $.
\end{proof}
In view of  Lemma \ref{lem:vanishes}, in order to establish \eqref{eq:system} for $T_i(q)$ $(1\leq i\leq \ell)$, it suffices to prove the identity
\begin{displaymath}
  \sum_{j=0}^\ell (-1)^j (X_{ij}(q)-X_{i,j-1}(q))\qbin{n-\ell-j}{n-2\ell}q^{\binom{\ell-j+1}2}=0.
\end{displaymath}
Let
\begin{displaymath}
  Y(n,\ell,i,j)= {n-\ell+i \brack j}_q+{n-\ell+i \brack j-\ell+i}_q- {n-\ell+i \brack j-1}_q-{n-\ell+i \brack j-1-\ell+i}_q.
\end{displaymath}
Then by Eq.~\eqref{eq:xijs},
\begin{displaymath}
  Y(n,\ell,i,j) = \begin{cases}
    X_{ij}(q)-X_{i,j-1}(q) & \text{for }1\leq i<\ell\\
    2(X_{ij}(q)-X_{i,j-1}(q))& \text{for } i=\ell.
  \end{cases}
\end{displaymath}
Therefore, it suffices to show that, for $1\leq i\leq \ell$,
\begin{equation}
  \label{eq:zero-sum}
  \sum_{j=0}^\ell (-1)^j Y(n,\ell,i,j)\qbin{n-\ell-j}{n-2\ell}q^{\binom{\ell-j+1}2}=0.
\end{equation}
The proof uses techniques from the theory of $q$-hypergeometric series, and appears in Section~\ref{sec:proof-main-identity}.

The non-singularity of $X(t)=(X_{ij}(t))_{0\leq i,j\leq \ell}$ is proved by using inequalities satisfied by the degrees of its entries.
We recall \cite[Lemma 4.4]{https://doi.org/10.48550/arxiv.2205.11076}.
\begin{lemma}
  \label{lem:maxperm}
  Let $(a_{ij})_{n\times n}$ be a real matrix such that whenever $i<k$ and $j<k$,
  $$
  a_{ i k}-a_{ij}< a_{kk}-a_{kj}.
  $$
  Then the sum $S(\sigma)=\sum_{1\leq i\leq n}a_{i\sigma(i)}$ attains its maximum value precisely when $\sigma$ is the identity permutation.
\end{lemma}
\begin{proposition}
  \label{prop:cofactor}
  For all $n\geq 2l\geq 0$, the determinant of the matrix $X(t)$ is non-zero.
  Therefore, for sufficiently large prime powers $q$, $\det X(q)\neq 0$.
\end{proposition}
\begin{proof}
  By Eq.~\eqref{eq:xijs}, the first row of $X(t)$ is the unit vector $(1,0,\dotsc,0)$.
  Therefore it suffices to show that the determinant of the submatrix $X'(t)=(X_{ij}(t))_{1\leq i,j\leq \ell}$ is non-zero.
  Let $a_{ij}=\deg X_{ij}(t)$.
  Since $\deg {n\brack k}_t = k(n-k)$, it follows from \eqref{eq:xijs} that, for $1\leq i,j\leq \ell$,
  \begin{displaymath}
    a_{ij}=\max\{j(n-\ell+i-j),(j-\ell+i)(n-j)\}=j(n-\ell+i-j),
  \end{displaymath}
  since
  \begin{align*}
    j(n-\ell+i-j)-(j-\ell+i)(n-j)=(\ell-i)(n-2j)\geq 0.
  \end{align*}
  If $i<k$ and $j<k$, then
  \begin{align*}
    a_{ik}-a_{ij}&=k(n-\ell+i-k)-j(n-\ell+i-j)\\
                 &=(k-j)(n-\ell+i-k-j)\\
                 &<(k-j)(n-\ell-j)\\
                 &=a_{kk}-a_{kj}.
  \end{align*}
  Now Lemma \ref{lem:maxperm} implies that $\det X'(t)$ has degree $\sum_{i=1}^\ell a_{ii}>0$ and is thus non-zero.
\end{proof}
This completes all steps in the proof of Theorem \ref{theorem:uniqueness} except for the identity \eqref{eq:zero-sum}. 
\section{Reduction to Heine's Transformations}
In this section, we prove the identity \eqref{eq:zero-sum} encountered in the proof of Theorem~\ref{theorem:uniqueness} by using a Heine transformation for $q$-hypergeometric series.
\label{sec:proof-main-identity}
Accordingly, define
\begin{align*}
  Y_1(n,\ell,i,j):= {n-\ell+i \brack j}_q,\quad&    Y_2(n,\ell,i,j):= {n-\ell+i \brack j-\ell+i}_q,\\
  Y_3(n,\ell,i,j):={n-\ell+i \brack j-1}_q,\quad&  Y_4(n,\ell,i,j):=    {n-\ell+i \brack j-1-\ell+i}_q.
\end{align*}
Let $S_r(n,\ell,i)$ denote the sum obtained in \eqref{eq:zero-sum} by replacing $Y(n,\ell,i,j)$ by $Y_r(n,\ell,i,j)$.
We need to show that $S_1+S_2-S_3-S_4=0$.
We will show that $S_1=S_4$ while $S_2=S_3$ by expressing the sums $S_i$ as $q$-hypergeometric series.

Define, as usual, the $q$-Pochhammer symbols
\begin{displaymath}
  (a;q)_\infty = \prod_{k=0}^\infty (1-aq^k) \text{ and } (a;q)_n = \frac{(a;q)_\infty}{(aq^n;q)_\infty} = \prod_{k=0}^{n-1}(1-aq^k).
\end{displaymath}
For convenience, we will also use the notation
\begin{displaymath}
  (a,b;q)_n = (a;q)_n(b;q)_n.
\end{displaymath}
Heine (see Gasper and Rahman \cite{MR2128719}) defined the $q$-hypergeometric series
\begin{displaymath}
  \heine(a,b;c;q,z) = \sum_{n\geq 0} \frac{(a,b;q)_n}{(q,c;q)_n}z^n.
\end{displaymath}
\begin{lemma}\label{lem:foursums}
  Let $m=n-\ell$.
  For all $1\leq i\leq \ell$, we have
  \begin{align}
    \tag{$S1$}\label{eq:s1}
    S_1(n,\ell,i) & = (-1)^\ell\frac{(q^{m+i-\ell+1};q)_\ell}{(q;q)_\ell}\;\heine(q^{-\ell},q^{m-\ell+1};q^{m+i-\ell+1};q,q^{\ell+1}),\\
    \tag{$S2$}\label{eq:s2}
    S_2(n,\ell,i) & = (-1)^\ell\frac{(q^{m+1};q)_i}{(q;q)_i}\;\heine(q^{-i},q^{m-\ell+1};q^{m+1};q,q^{i+1}),\\
    \tag{$S3$}\label{eq:s3}
    S_3(n,\ell,i) & = (-1)^\ell\frac{(q^{m+i-\ell+2};q)_{\ell-1}}{(q;q)_{\ell-1}}\;\heine(q^{1-\ell},q^{m-\ell+1};q^{m+i-\ell+2};q,q^{\ell}),\\
    \tag{$S4$}\label{eq:s4}
    S_4(n,\ell,i) & = (-1)^\ell\frac{(q^{m+2};q)_{i-1}}{(q;q)_{i-1}}\;\heine(q^{1-i},q^{m-\ell+1};q^{m+2};q,q^{i}).
  \end{align}
\end{lemma}
We will see that Heine's transformation formula \cite[Eq. (III.2)]{MR2128719}
transforms $S_1(n,\ell,i)$ into $S_4(n,\ell,i)$ and $S_2(n,\ell,i)$ into $S_3(n,\ell,i)$; therefore \eqref{eq:zero-sum} will follow from the lemma.
\begin{proof}
  [Proof of the lemma]
  We will use the following identities (equation numbers refer to Gasper and Rahman \cite[Appendix~I]{MR2128719})
  \begin{gather}
    \tag{I.7}\label{eq:I.7}
    (a;q)_n = (q^{1-n}/a;q)_n(-a)^n q^{\binom n2},\\
    \tag{I.10}\label{eq:I.10}
    (a;q)_{n-k} = \frac{(a;q)_n}{(q^{1-n}/a;q)_k}\left(-\frac qa\right)^kq^{\binom k2-nk},\\
    \tag{I.42}\label{eq:I.42}
    \qbin\alpha k = \frac{(q^{-\alpha};q)_k}{(q;q)_k}(-q^\alpha)^kq^{-\binom k2}.
  \end{gather}
  Replacing $j$ by $\ell-j$ in the sum
  \begin{displaymath}
    S_1(n,\ell,i) = \sum_{j=0}^\ell(-1)^j \qbin{m+i}j\qbin{m-j}{m-\ell}q^{\binom{\ell-j+1}2}
  \end{displaymath}
  gives
  \begin{displaymath}
    \sum_{j=0}^\ell (-1)^{\ell-j}\qbin{m+i}{\ell-j}\qbin{m-\ell+j}j q^{\binom{j+1}2}.
  \end{displaymath}
  The identity~\eqref{eq:I.42} allows us to write
  \begin{displaymath}
    \qbin{m+i}{\ell-j}\qbin{m-\ell+j}j = (-1)^\ell\frac{\qqpoch{-m-i}{\ell-j}}{\qqpoch{}{\ell-j}}\frac{\qqpoch{\ell-m-j}{j}}{\qqpoch{}{j}}q^{\ell(m+i)-\bt \ell-ij}.
  \end{displaymath}
  Applying~(\ref{eq:I.10}) to $\qqpoch{-m-i}{\ell-j}$ and $\qqpoch{}{\ell-j}$ gives
  \begin{displaymath}
    (-1)^\ell\frac{\qqpoch{-m-i}{\ell}}{\qqpoch{}{\ell}}\frac{\qqpoch{-\ell}{j}}{\qqpoch{m+i-\ell+1}{j}}\frac{\qqpoch{\ell-m-j}{j}}{\qqpoch{}{j}}q^{\ell(m+i)+(m+1)j-\bt \ell}.
  \end{displaymath}
  Applying~(\ref{eq:I.7}) to $\qqpoch{-m-i}{\ell}$ and $\qqpoch{\ell-m-j}{j}$ gives
  \begin{equation}
    \label{eq:s1sum}
    \qbin{m+i}{\ell-j}\qbin{m-\ell+j}j = (-1)^j\frac{\qqpoch{m+i-\ell+1}{\ell}}{\qqpoch{}{\ell}}
    \frac{(q^{-\ell},q^{m-\ell+1};q)_j}{(q,q^{m+i-\ell+1};q)_j}q^{\ell j-\bt j}.
  \end{equation}
  Thus
  \begin{displaymath}
    S_1(n,\ell,i) = (-1)^\ell\frac{\qqpoch{m+i-\ell+1}{\ell}}{\qqpoch{}{\ell}}\sum_{j=0}^\ell\frac{(q^{-\ell},q^{m-\ell+1};q)_j}{(q,q^{m+i-\ell+1};q)_j}q^{(\ell+1)j}.
  \end{displaymath}
  Since $(q^{-\ell};q)_j=0$ for $j>\ell$, the sum can be extended to infinity, giving (\ref{eq:s1}).

  In order to prove \eqref{eq:s3}, observe that
  \begin{align*}
    \qbin{m+i}{\ell-j-1}\qbin{m-\ell+j}j = \qbin{m+i}{\ell-j}\qbin{m-\ell+j}j \frac{1-q^{\ell-j}}{1-q^{m+i-\ell+j+1}}.
  \end{align*}
  Apply~(\ref{eq:s1sum}) and rewrite the right hand side as
  \begin{displaymath}
    (-1)^j\frac{\qqpoch{m+i-\ell+1}\ell}{\qqpoch{}\ell}\frac{(q^{-\ell},q^{m-\ell+1};q)_j}{(q,q^{m+i-\ell+1};q)_j}\frac{1-q^{\ell-j}}{1-q^{m+i-\ell+j+1}}q^{\ell j-\bt j}.
  \end{displaymath}
  Making the substitutions
  \begin{align*}
    \frac{\qqpoch{m+i-\ell+1}\ell}{\qqpoch{}\ell} & = \frac{\qqpoch{m+i-\ell+2}{\ell-1}}{\qqpoch{}{\ell-1}}\frac{1-q^{m+i-\ell+1}}{1-q^\ell},\\
    \frac{\qqpoch{-\ell}j}{\qqpoch{m+i-\ell+1}j} & = \frac{1-q^{-\ell}}{1-q^{j-\ell}}\frac{\qqpoch{1-\ell}j}{\qqpoch{m+i-\ell+2}j}\frac{1-q^{m+i-\ell+j+1}}{1-q^{m+i-\ell+1}},
  \end{align*}
  and cancelling out common factors gives
  \begin{equation}
    \label{eq:s3sum}
    \qbin{m+i}{\ell-j-1}\qbin{m-\ell+j}j = (-1)^j\frac{\qqpoch{m+i-\ell+2}{\ell-1}}{\qqpoch{}{\ell-1}}\frac{(q^{1-\ell},q^{m-\ell+1};q)_j}{(q,q^{m+i-\ell+2};q)_j}q^{(\ell-1)j-\bt j}.
  \end{equation}
  Evaluating $S_3(n,\ell,i)$ after replacing $j$ by $\ell-j$ in the sum and using \eqref{eq:s3sum} gives \eqref{eq:s3}.

  To prove (\ref{eq:s4}), we proceed as in the proof of (\ref{eq:s1}).
  \begin{align*}
    S_4(n,\ell,i) & = \sum_{j=0}^\ell\qbin{m+i}{j-1-\ell+i}\qbin{m-j}{m-\ell}q^{\binom{\ell-j+1}2}\\
                  & = \sum_{j=0}^\ell (-1)^{\ell-j}\qbin{m+i}{i-1-j}\qbin{m-\ell+j}j q^{\binom{j+1}2}.
  \end{align*}
  The identity \eqref{eq:I.42} allows us to write
  \begin{multline*}
    \qbin{m+i}{i-1-j}\qbin{m-\ell+j}j \\= (-1)^{i-1}\frac{\qqpoch{-m-i}{i-1-j}}{\qqpoch{}{i-1-j}}\frac{\qqpoch{\ell-m-j}j}{\qqpoch{}j}q^{(m+1)(i-1)+\bt i - (\ell+1)j}.
  \end{multline*}
  Applying \eqref{eq:I.10} to $\qqpoch{-m-i}{i-1-j}$ and $\qqpoch{}{i-1-j}$ gives
  \begin{displaymath}
    (-1)^{i-1}\frac{\qqpoch{-m-i}{i-1}}{\qqpoch{}{i-1}}\frac{\qqpoch{1-i}j}{\qqpoch{m+2}j}\frac{\qqpoch{\ell-m-j}j}{\qqpoch{}j}q^{(m+1)(i-1)+\bt i + (m+i-\ell)j}.
  \end{displaymath}
  Finally, applying \eqref{eq:I.7} to the terms $\qqpoch{-m-i}{i-1}$ and $\qqpoch{\ell-m-j}j$ gives
  \begin{equation}
    \label{eq:s4sum}
    \qbin{m+i}{i-1-j}\qbin{m-\ell+j}j = (-1)^j\frac{\qqpoch{m+2}{i-1}}{\qqpoch{}{i-1}}\frac{(q^{1-i},q^{m-\ell+1};q)_j}{(q,q^{m+2};q)_j}q^{-\bt{j+1}+ij}
  \end{equation}
  Evaluating $S_4(n,\ell,i)$ using \eqref{eq:s4sum} gives \eqref{eq:s4}.

  In order to prove \eqref{eq:s2}, observe that
  \begin{displaymath}
    \qbin{m+i}{i-j} \qbin{m-\ell+j}j = \qbin{m+i}{i-j-1}\qbin{m-\ell+j}j \frac{1-q^{m+j+1}}{1-q^{i-j}}.
  \end{displaymath}
  Applying \eqref{eq:s4sum} allows us to write the right hand side as
  \begin{displaymath}
    (-1)^j\frac{\qqpoch{m+2}{i-1}}{\qqpoch{}{i-1}}\frac{(q^{1-i},q^{m-\ell+1};q)_j}{(q,q^{m+2};q)_j}\frac{1-q^{m+j+1}}{1-q^{i-j}}q^{-\bt{j+1}+ij}.
  \end{displaymath}
  Making the substitutions
  \begin{align*}
    \frac{\qqpoch{m+2}{i-1}}{\qqpoch{}{i-1}} & = \frac{\qqpoch{m+1}i}{\qqpoch{}i}\frac{1-q^i}{1-q^{m+1}},\\
    \frac{\qqpoch{1-i}j}{\qqpoch{m+2}j} & = \frac{1-q^{j-i}}{1-q^{-i}}\frac{\qqpoch{-i}j}{\qqpoch{m+1}j}\frac{1-q^{m+1}}{1-q^{m+j+1}}
  \end{align*}
  and cancelling out common factors gives
  \begin{equation}
    \label{eq:s2sum}
    \qbin{m+i}{i-j} \qbin{m-\ell+j}j = (-1)^j\frac{\qqpoch{m+1}i}{\qqpoch{}i}\frac{(q^{-i},q^{m-\ell+1};q)_j}{(q,q^{m+1};q)_j}q^{-\bt{j+1}+(i+1)j}.
  \end{equation}
  Evaluating $S_2(n,\ell,i)$ using \eqref{eq:s2sum} after replacing $j$ by $\ell-j$ in the sum gives (\ref{eq:s2}).
\end{proof}
\begin{proof}
  [Proof that $S_1=S_4$]
  Applying Heine's transformation formula \cite[Eq.~(III.2)]{MR2128719}
  \begin{equation}
    \label{eq:Heine}
    \heine(a,b;c;q,z) = \frac{(c/b,bz;q)_\infty}{(c,z;q)_\infty}\;\heine(abz/c,b;bz;q,c/b)
  \end{equation}
  with $(a,b,c,z)=(q^{-\ell},q^{m-\ell+1},q^{m+i-\ell+1},q^{\ell+1})$ gives
  \begin{displaymath}
    S_1(n,\ell,i) =  (-1)^\ell\frac{(q^{m+i-\ell+1};q)_\ell}{(q;q)_\ell}\frac{(q^i,q^{m+2};q)_\infty}{(q^{m+i-\ell+1},q^{\ell+1};q)_\infty}\heine(q^{1-i},q^{m-\ell+1};q^{m+2};q,q^i).
  \end{displaymath}
  Observe that
  \begin{align*}
    \frac{(q^{m+i-\ell+1};q)_\ell}{(q;q)_\ell}\frac{(q^i,q^{m+2};q)_\infty}{(q^{m+i-\ell+1},q^{\ell+1};q)_\infty} & = \frac{(q^{m+i-\ell+1};q)_\ell}{(q;q)_\ell}\frac{\qqpoch{i}{\ell-i+1}}{\qqpoch{m+i-\ell+1}{\ell-i+1}}\\
                                                                                                                  &=\frac{\qqpoch{m+2}{i-1}}{\qqpoch{}{i-1}},
  \end{align*}
  which gives $S_1(n,\ell,i)=S_4(n,\ell,i)$.
\end{proof}
\begin{proof}
  [Proof that $S_2=S_3$]
  Heine's transformation formula (\ref{eq:Heine}) with $(a,b,c,z)=(q^{1-\ell},q^{m-\ell+1},q^{m+i-\ell+2},q^\ell)$ gives
  \begin{displaymath}
    S_3(n,\ell,i) = (-1)^\ell\frac{\qqpoch{m+i-\ell+2}{\ell-1}}{\qqpoch{}{\ell-1}}\frac{(q^{i+1},q^{m+1};q)_\infty}{(q^{m+i-\ell+2},q^\ell;q)_\infty}\heine(q^{-i},q^{m-\ell+1};q^{m+1};q,q^{i+1}).
  \end{displaymath}
  Observe that
  \begin{align*}
    \frac{\qqpoch{m+i-\ell+2}{\ell-1}}{\qqpoch{}{\ell-1}}\frac{(q^{i+1},q^{m+1};q)_\infty}{(q^{m+i-\ell+2},q^\ell;q)_\infty} & = \frac{\qqpoch{m+i-\ell+2}{\ell-1}}{\qqpoch{}{\ell-1}}\frac{\qqpoch{i+1}{\ell-i-1}}{\qqpoch{m+i-\ell+2}{\ell-i-1}}\\
                                                                                                                             & = \frac{\qqpoch{m+1}i}{\qqpoch{}i},
  \end{align*}
  giving $S_2(n,\ell,i)=S_3(n,\ell,i)$.
\end{proof}
This completes the proof of the main theorem.
\section{The $q$-Hermite Catalan matrix}
\label{sec:incomplete-chord-diags}
In this section, we discuss the connection between our main theorem and Touchard's formula for the entries of the $q$-Hermite Catalan matrix. 

An \emph{extended chord diagram} is a visual representation of an involution $\sigma$ on $[n]$.
Arrange $n$ nodes labelled $1,\dotsc,n$ along the $X$-axis.
To their right, add a node labelled $\infty$.
A circular arc lying above the $X$-axis is used to connect the elements of each $2$-cycle of $\sigma$.
Each fixed point of $\sigma$ is connected to the node $\infty$.
The extended chord diagram of the involution $(1,4)(2,6)(7,8)$ on the set $[8]$ is shown below:
\begin{center}  
  \begin{tikzpicture}
    [every node/.style={circle,fill=black,inner sep=0pt, minimum size=6pt}]
    \node[label=below:$1$] (1) at (1,0) {};
    \node[label=below:$2$] (2) at (2,0) {};
    \node[label=below:$3$] (3) at (3,0) {};
    \node[label=below:$4$] (4) at (4,0) {};
    \node[label=below:$5$] (5) at (5,0) {};
    \node[label=below:$6$] (6) at (6,0) {};
    \node[label=below:$7$] (7) at (7,0) {};
    \node[label=below:$8$] (8) at (8,0) {};
    \node[label=below:$\infty$] (0) at (9,0) {};
    \draw[thick,color=teal]
    (1) [out=45, in=135] to  (4);
    \draw[thick,color=teal]
    (2) [out=45, in=135] to  (6);
    \draw[thick,color=teal]
    (3) [out=45, in=135] to  (0);
    \draw[thick,color=teal]
    (5) [out=45, in=135] to  (0);
    \draw[thick,color=teal]
    (7) [out=45, in=135] to  (8);
  \end{tikzpicture}
\end{center}
A \emph{crossing} is a pair of arcs $[(i,j),(k,\ell)]$ such that $i<k<j<\ell$.
The extended chord diagram above has four crossings, namely $[(1,4),(2,6)]$, $[(1,4),(3,\infty)]$, $[(2,6),(3,\infty)]$, and $[(2,6),(5,\infty)]$.
Let $v(\sigma)$ denote the number of crossings of the extended chord diagram of an involution $\sigma$.

Let $\Inv(n,k)$ denote the set of involutions in $S_n$ with $k$ fixed points.
Define
\begin{equation}
  \label{eq:ank}
  a_{nk}(q) = \sum_{\sigma\in \Inv(n,k)} q^{v(\sigma)}.
\end{equation}
If $n-k$ is odd then $\Inv(n,k)=\emptyset$, so $a_{nk}(q)=0$.
If $n-k$ is even, then an element of $\Inv(n,k)$ has $l:=(n-k)/2$ cycles of length two.

For each non-negative integer $k$, let $[k]_q$ denote the $q$-integer $1+q+\dotsb+ q^{k-1}$.
\begin{lemma}
  \label{lemma:Catalan}
  We have
  \begin{gather*}
    a_{00}(q) = 1, \quad a_{0k} = 0 \text{ for } k>0,\\
    a_{nk}(q) = a_{n-1,k-1}(q) + [k+1]_q a_{n-1,k+1}(q) \quad (n>0).
  \end{gather*}
\end{lemma}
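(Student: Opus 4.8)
The plan is to fix $n>0$ and $k\ge 0$ and to sort the involutions $\sigma\in\Inv(n,k)$ according to the behaviour of the largest node, $n$, in the extended chord diagram of $\sigma$: either $n$ is a fixed point of $\sigma$ (so it is joined to $\infty$), or $n$ lies in a $2$-cycle $(m,n)$ with $m<n$. The two initial conditions are immediate, since the empty involution is the only one on the empty set and it has no crossings. Before splitting into cases I would record the one geometric fact used throughout: the arc joining the node $n$ to $\infty$ is never part of a crossing, because a crossing $[(i,j),(k,l)]$ requires four distinct positions with $i<k<j<l$, while no node other than $\infty$ lies to the right of $n$.

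In the first case, deleting the node $n$ together with its arc to $\infty$ is a bijection from $\{\sigma\in\Inv(n,k):\sigma(n)=n\}$ onto $\Inv(n-1,k-1)$. By the fact just noted this arc contributes nothing to $v(\sigma)$, and deleting it changes no other crossing, so $v$ is preserved by the bijection. Hence the contribution of this case to $a_{nk}(q)$ is $a_{n-1,k-1}(q)$.

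In the second case, if $(m,n)$ is the $2$-cycle through $n$, then deleting the node $n$ turns $m$ into a fixed point and yields $\sigma'\in\Inv(n-1,k+1)$; conversely $\sigma$ is reconstructed from $\sigma'$ by choosing one of the $k+1$ fixed points of $\sigma'$ to be joined to the new node $n$. The crux is to see how $v$ changes: writing the fixed points of $\sigma'$ as $f_1<\cdots<f_{k+1}$ and joining $n$ to $f_i$, I would check that (a) crossings among the arcs not incident to $f_i$ are untouched; (b) replacing the arc $(f_i,\infty)$ of $\sigma'$ by $(f_i,n)$ does not change which $2$-cycle arcs it crosses, since for such an arc $(a,b)$ the crossing condition is $a<f_i<b$ whether the right endpoint is $n$ or $\infty$; and (c) the arc $(f_i,n)$ of $\sigma$ crosses $(f_j,\infty)$ exactly when $j>i$, whereas in $\sigma'$ two arcs ending at $\infty$ never cross. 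Thus $v(\sigma)=v(\sigma')+(k+1-i)$, and summing over $i\in\{1,\dots,k+1\}$ and then over $\sigma'$ gives
\[
\sum_{\sigma} q^{v(\sigma)} = \sum_{\sigma'\in\Inv(n-1,k+1)} q^{v(\sigma')}\sum_{i=1}^{k+1} q^{k+1-i} = [k+1]_q\, a_{n-1,k+1}(q),
\]
where the left-hand sum runs over those $\sigma\in\Inv(n,k)$ with $n$ in a $2$-cycle. Adding the two cases gives the recurrence.

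The main obstacle is the bookkeeping (a)--(c) in the second case: one must verify precisely that inserting the node $n$ and the arc $(f_i,n)$ creates the $k+1-i$ new crossings with the arcs $(f_j,\infty)$, $j>i$, and destroys or creates nothing else. Item (c) is where the ordering $n<\infty$ matters, and it is what produces the geometric progression $1+q+\dots+q^k=[k+1]_q$; the rest reduces to comparing the interleaving condition $i'<k'<j'<l'$ for pairs of arcs before and after the modification.
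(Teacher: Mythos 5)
Your proof is correct and follows essentially the same route as the paper: classifying involutions by whether $n$ is a fixed point or lies in a $2$-cycle, which is the inverse of the paper's extension bijection, and your count of $k+1-i$ new crossings when $n$ is joined to the $i$-th fixed point from the left agrees with the paper's ``$r-1$ new crossings for the $r$-th fixed point from the right.'' The extra bookkeeping in items (a)--(c) only makes explicit what the paper leaves to the reader.
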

\begin{proof}
  Each involution $\sigma\in \Inv(n-1,k-1)$ can be extended to an element of $\Inv(n,k)$ by adding $n$ as a fixed point.
  Furthermore, each $\sigma\in \Inv(n-1,k+1)$ can be extended to an element of $\Inv(n,k)$ in $k+1$ different ways: any one of its $k+1$ fixed points can be paired with $n$.
  Pairing the $r$-th fixed point from right to left with $n$ results in $r-1$ new crossings.
  Taken together, these $k+1$ choices contribute $(1+q+\dotsb +q^k)q^{v(\sigma)}=[k+1]_q q^{v(\sigma)}$ to $a_{nk}(q)$.
  Since every element of $\Inv(n,k)$ can be obtained uniquely by one of these methods, the identity of the lemma follows.
\end{proof}
Touchard \cite{MR46325} studied the polynomials
\begin{displaymath}
  T_m(q) = a_{2m,0}(q) = \sum_{\sigma\in \Inv(2m,0)} q^{v(\sigma)},
\end{displaymath}
which admit a simple (but subtle) formula
\begin{equation}
  \label{eq:tr}
  (q-1)^m T_m(q) = \sum_{j=0}^{m} (-1)^j\left[\binom{2m}j - \binom{2m}{j-1}\right] q^{\binom{m-j+1}2},
\end{equation}
known as the \emph{Touchard-Riordan formula}.
The polynomials $a_{n,n-2\ell}$ are precisely the entries of the Catalan matrix associated to the normalized $q$-Hermite orthogonal polynomials of Ismail, Stanton and Viennot~\cite{MR930175}.
See Aigner~\cite[Chapter~7]{MR2339282} for a comprehensive exposition.

The combinatorial theory of orthogonal polynomials \cite{viennot1983theorie} places the moments of an orthogonal polynomial sequence in the first column of a Catalan matrix: if an orthogonal polynomial sequence $\{P_k(x)\}_{k\geq 0}$ satisfies the three-term recurrence relation
\begin{gather*}
  P_{k+1}(x) = (x-b_k)P_k(x)-\lambda_kP_{k-1}(x) \text{ for } k\geq 1,\\
  \text{with }P_0(x)=1,\quad P_1(x)=x-b_0,
\end{gather*}
for some $\{b_k\}_{k\geq 0}$ and $\{\lambda_k\}_{k\geq 1}$, with $\lambda_k\neq 0$, the entries of the Catalan matrix $(c_{nk})_{n,k\geq 0}$ are given by
\begin{gather*}
  c_{00} = 1, \quad c_{0k} = 0 \text{ for } k>0,\\
  c_{nk} = c_{n-1,k-1} + b_k c_{n-1,k} + \lambda_{k+1} c_{n-1,k+1}.
\end{gather*}
The moments of the orthogonal polynomial sequence are
\begin{displaymath}
  \mu_n = c_{n0} \text{ for } n\geq 0.
\end{displaymath}
Lemma~\ref{lemma:Catalan} implies that the polynomials $a_{nk}(q)$ of (\ref{eq:ank}) are the entries of the Catalan matrix with $b_k=0$ and $\lambda_k=[k]_q$, which correspond to the combinatorial version of the $q$-Hermite orthogonal polynomial sequence \cite[Eq.~(2.11)]{MR930175}.
Thus Touchard's polynomials $T_m(q)$ are the even moments of the $q$-Hermite orthogonal polynomial sequence (the odd moments being $0$).
This is well-known and plays a role in the proof of the Touchard-Riordan formula \cite[Chapter~7]{MR2339282}.

Let $T$ be an $n\times n$ diagonalizable matrix with distinct eigenvalues in $\Fq$.
By (\ref{eq:anti-inv-to-ml}), the number of $\ell$-dimensional $T$-anti-invariant subspaces is equal to the number of subspaces with $T$-profile $(m,\ell)$, where $m=n-\ell$.
The conjugate of the partition $\mu=(m,\ell)$ is the partition $\mu'=(2^\ell,1^{m-\ell})$.
Set partitions with $\ell$ blocks of size $2$ and $m-\ell$ blocks of size $1$ can be identified with involutions on $[n]$ with $m-\ell$ fixed points.
The interlacing number \cite[Defn.~3.3]{pr} of such a set partition reduces to the number of crossings of the extended chord diagram on the corresponding involution.
Therefore, the formula (\ref{eq:split-ss}) can be rephrased as follows.
\begin{theorem}
  \label{theorem:rss}
  For all integers $n\geq 2\ell\geq 0$,
  \begin{displaymath}
    \omega_{n\ell}^T = (q-1)^\ell q^{\binom \ell 2}a_{n,n-2\ell}(q).
  \end{displaymath}
\end{theorem}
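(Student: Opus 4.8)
The plan is to deduce Theorem~\ref{theorem:rss} by specializing the combinatorial formula (\ref{eq:split-ss}) for $\sigma_\mu^T$ to the two-part partition $\mu=(m,l)$ with $m=n-l$, and then recognizing the resulting sum over set partitions as the polynomial $a_{n,n-2l}(q)$ defined in (\ref{eq:ank}). First I would apply (\ref{eq:anti-inv-to-ml}): because $T$ is similar to $T^*$ and the annihilator map $W\mapsto W^0$ is a bijection between subspaces of $T$-profile $(m,l)$ and $l$-dimensional $T^*$-anti-invariant subspaces, we have $\alpha_{nl}^T=\sigma_{(m,l)}^T$. Since $T$ is assumed diagonalizable with distinct eigenvalues in $\Fq$, formula (\ref{eq:split-ss}) applies verbatim to $\sigma_{(m,l)}^T$.

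Next I would read off the scalar prefactor. For $\mu=(m,l)$ the only part with index $j\geq 2$ is $\mu_2=l$, so $\sum_{j\geq 2}\mu_j=l$ and $\sum_{j\geq 2}\binom{\mu_j}2=\binom l2$; hence (\ref{eq:split-ss}) becomes $\alpha_{nl}^T=(q-1)^lq^{\binom l2}\sum_{\AA\in\Pi(\mu')}q^{v(\AA)}$. The conjugate partition is $\mu'=(2^l,1^{m-l})$, so $\Pi(\mu')$ consists of the partitions of $[n]$ into $l$ blocks of size $2$ and $m-l=n-2l$ singleton blocks. Encoding each size-$2$ block as a transposition and each singleton as a fixed point identifies $\Pi(\mu')$ bijectively with $\Inv(n,n-2l)$. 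Under this identification the interlacing number $v(\AA)$ of \cite{pr} becomes the crossing number $v(\sigma)$ of the extended chord diagram (as noted in the discussion preceding the theorem), so by (\ref{eq:ank}) the remaining sum equals $a_{n,n-2l}(q)$, and the theorem follows.

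The only genuinely nontrivial point — the step I would expect to require the most care if one wants a self-contained argument rather than quoting the preceding discussion — is the claim that the interlacing statistic of \cite{pr}, restricted to set partitions with blocks of size at most $2$, coincides with the crossing number of the extended chord diagram. Two families of interlacings occur: those between two size-$2$ blocks, which match the crossings $[(i,j),(k,l)]$ with $i<k<j<l$ of two ordinary chords, and those involving a singleton, which in the extended diagram become crossings between a chord and an arc running to the node $\infty$. I would verify the match by induction on $n$, tracking how $v$ changes when the largest element $n$ is inserted — either as a fresh singleton (no change) or by merging into an existing block (an increment determined by the position of $n$ among the current fixed points) — and comparing with the three-term recurrence of Lemma~\ref{lemma:Catalan}. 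Since the interlacing generating function over $\Pi(\mu')$ and the polynomials $a_{nk}(q)$ then satisfy the same recurrence and the same initial conditions, they agree, which closes this gap and completes the proof.
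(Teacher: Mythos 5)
Your proposal is correct and follows essentially the same route as the paper: specialize \eqref{eq:split-ss} to $\mu=(m,l)$, compute the prefactor $(q-1)^l q^{\binom l2}$, and identify $\Pi(\mu')$ with $\Inv(n,n-2l)$ so that the interlacing number becomes the crossing number of the extended chord diagram. The paper simply asserts this last identification by citing the definition in \cite{pr}, whereas you additionally sketch a recurrence argument to verify it; that is a reasonable way to fill in the detail but does not change the structure of the proof.
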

Since all $T$-invariant subspaces are direct sums of eigenspaces, $X^T_j$ is just the binomial coefficient $\binom nj$.
Combining Theorem~\ref{theorem:rss} with our formula (\ref{eq:main}) for $\omega^T_{n\ell}$ gives a new proof of Touchard's formula \cite[Eq.~(28)]{MR46325} for $a_{n,n-2\ell}(q)$. 
\begin{theorem}[Touchard's formula]
  \label{theorem:Touchard}
  For all integers $n\geq 2\ell\geq 0$,
  \begin{displaymath}
    (q-1)^\ell a_{n,n-2\ell}(q) = \sum_{j=0}^\ell (-1)^j\left[\binom nj-\binom n{j-1}\right]\qbin{n-\ell-j}{n-2\ell}q^{\binom{\ell-j+1}2}.
  \end{displaymath}
\end{theorem}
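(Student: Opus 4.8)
The plan is to extract Touchard's formula by comparing two independent evaluations of $\alpha^T_{nl}$ for a conveniently chosen operator $T$, and this is exactly the route the preceding sections set up. Note first that both sides of the asserted identity lie in $\ZZ[q]$: the entries $a_{nk}(q)$ are polynomials by \eqref{eq:ank} (or by the recurrence in Lemma~\ref{lemma:Catalan}), and the right-hand side is visibly a $\ZZ$-combination of $q$-binomial coefficients times monomials. Hence it is enough to prove the identity for infinitely many prime powers $q$, and we may freely assume $q\geq n$.

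So fix a prime power $q\geq n$, pick $n$ distinct elements of $\Fq$, and let $T\in M_n(\Fq)$ be diagonal with these eigenvalues. Every $T$-invariant subspace is then a direct sum of the coordinate eigenlines, so $X^T_j=\binom nj$ for $0\leq j\leq n$ (with the convention $X^T_{-1}=0=\binom n{-1}$). Substituting this into the Main Theorem gives
\begin{displaymath}
  \alpha^T_{nl} = q^{\binom l2}\sum_{j=0}^l (-1)^j\left[\binom nj-\binom n{j-1}\right]\qbin{n-l-j}{n-2l}q^{\binom{l-j+1}2}.
\end{displaymath}
On the other hand, Theorem~\ref{theorem:rss} — which repackages the interlacing-number formula \eqref{eq:split-ss} of \cite{pr} through the identification \eqref{eq:anti-inv-to-ml} — evaluates the same number as $\alpha^T_{nl}=(q-1)^l q^{\binom l2}a_{n,n-2l}(q)$. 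Equating the two expressions and cancelling the nonzero factor $q^{\binom l2}$ yields the claimed identity for every prime power $q\geq n$, hence as an identity in $\ZZ[q]$, since two polynomials agreeing at infinitely many points coincide.

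I should stress that this comparison step is itself routine; all the difficulty is shifted onto the two ingredients. The genuine obstacle, handled elsewhere in the paper, is proving the Main Theorem — equivalently the polynomial identity \eqref{eq:zero-sum} — whose $n=2l$ specialization already carries the Touchard–Riordan formula \eqref{eq:tr}; and Theorem~\ref{theorem:rss} in turn rests on the nontrivial combinatorial formula \eqref{eq:split-ss}. One could instead attempt a self-contained combinatorial proof, checking that the right-hand side obeys the same recurrence and initial data as $(q-1)^l a_{n,n-2l}(q)$ via Lemma~\ref{lemma:Catalan}; but verifying the three-term recurrence for that alternating sum of $q$-binomials looks more delicate than the finite-field comparison above, so I would not take that path.
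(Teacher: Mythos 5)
Your proof is correct and follows the same route as the paper: substitute $X^T_j=\binom nj$ for a diagonalizable $T$ with distinct eigenvalues into the Main Theorem and compare with Theorem~\ref{theorem:rss}. The only addition is that you make explicit the (correct, and worth stating) point that one needs $q\geq n$ for such a $T$ to exist over $\Fq$, and that agreement at infinitely many prime powers upgrades to a polynomial identity --- a detail the paper leaves implicit.
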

Specializing to $n=2\ell$ recovers the Touchard-Riordan formula \eqref{eq:tr}.

\section{Acknowledgements}
We are indebted to an anonymous referee for several comments and suggestions that helped improve the overall presentation of this paper. We thank Divya Aggarwal for her comments on an earlier draft of this manuscript.
We thank Michael Schlosser for suggesting the method used in the proof of the main identity \eqref{eq:zero-sum}.
The second author was partially supported by a MATRICS grant MTR/2017/000794 awarded by the Science and Engineering Research Board and an Indo-Russian project DST/INT/RUS/RSF/P41/2021.

\printbibliography
  
\end{document}    
\end{displaymath}